\documentclass[pdflatex,sn-mathphys-num]{sn-jnl}

\usepackage{graphicx}%
\usepackage{multirow}%
\usepackage{amsmath,amssymb,amsfonts}%
\usepackage{amsthm}%
\usepackage{mathrsfs}%
\usepackage[title]{appendix}%
\usepackage{xcolor}%
\usepackage{textcomp}%
\usepackage{manyfoot}%
\usepackage{booktabs}%
\usepackage{algorithm}%
\usepackage{algorithmicx}%
\usepackage{algpseudocode}%
\usepackage{listings}%

\theoremstyle{thmstyleone}%
\newtheorem{theorem}{Theorem}[section]
\newtheorem{lemma}[theorem]{Lemma}

\theoremstyle{thmstyletwo}%
\newtheorem{example}[theorem]{Example}%

\theoremstyle{thmstylethree}%

\numberwithin{equation}{section}
\numberwithin{table}{section}

\begin{document}

\title[External Difference Families Arising from
Two or Three Cyclotomic Classes]{External Difference Families Arising from
Two or Three Cyclotomic Classes}


\author*[1]{\fnm{Miwako} \sur{Mishima}}\email{mishima.miwako.n0@f.gifu-u.ac.jp}

\author[1]{\fnm{Yu} \sur{Tsunoda}}\email{tsunoda.yu.z6@f.gifu-u.ac.jp}

\affil[1]{\orgdiv{Department of Electrical, Electronic and Computer Engineering}, \orgname{\\Gifu University}, \orgaddress{\street{1-1 Yanagido}, \city{Gifu}, \postcode{501-1193}, 
\country{Japan}}}



\abstract{
We study external difference families arising from cyclotomic classes in finite fields from the viewpoint of a fixed number of blocks. 
For families consisting of even-indexed cyclotomic classes, the EDF condition can be expressed in terms of relations among cyclotomic numbers. 
We first study the two-block case and recover a classical characterization in terms of quadratic forms.
Our main result shows that, for a prime \(p=12k+1\), the family
\(\{C_0^6,C_2^6,C_4^6\}\)
forms an EDF in \(\mathbb{F}_p\) if and only if \(k\) is a square.
The proofs combine symmetry relations of cyclotomic numbers with their explicit evaluations.
}

\keywords{external difference family, cyclotomic number, cyclotomy}


\pacs[MSC Classification]{05B10, 11T22, 05B05}

\maketitle

\section{Introduction}\label{sec1}

External difference families (EDFs) and related combinatorial structures play important roles in algebraic manipulation detection (AMD) codes and  synchronization systems (see, for example, \cite{paterson2016combinatorial,mutoh2008difference}). 
Cyclotomic constructions over finite fields provide a standard approach  to constructing EDFs and related difference systems.

Let \(q=ef+1\) be a prime power and let \(C_i^e\) denote the \(i\)-th cyclotomic class of order \(e\) in the finite field \(\mathbb{F}_q\). 
By selecting suitable cyclotomic classes or unions of such classes, one obtains many classical examples of EDFs and related structures. Constructions of this type have been studied extensively from the viewpoints of difference systems of sets, authentication codes, and AMD codes. 
Recently, Huczynska and Johnson \cite{huczynska2023internal} introduced the notion of an external partial difference family (EPDF), which generalizes EDFs by allowing two distinct multiplicities for external differences. 
Cyclotomic constructions provide natural examples of EPDFs, and the problem of determining when such constructions form EDFs has also been studied.

In this paper, we focus on cyclotomic constructions with a fixed number of blocks. More precisely, we study families consisting of prescribed cyclotomic classes and determine when they form EDFs. 
This viewpoint leads naturally to explicit arithmetic conditions involving cyclotomic numbers. 
The two-block case is closely related to classical cyclotomic constructions, whereas the three-block case leads to substantially richer arithmetic conditions.
The main result of the present paper concerns the first genuinely nontrivial case involving three blocks. 
Let \(p=12k+1\) be a prime and let \(C_i^6\) denote the \(i\)-th cyclotomic class of order six in \(\mathbb{F}_p\). We prove that the family
\[
\mathcal D=\{C_0^6,C_2^6,C_4^6\}
\]
forms an EDF if and only if \(k\) is a square. 
The proof combines symmetry relations of cyclotomic numbers with their explicit evaluations.

We also consider the two-block case from the same viewpoint.
The argument serves as a useful preliminary example for the three-block case considered later.
We also briefly discuss the difficulties arising when the number of blocks is at least four. 

The paper is organized as follows.
Section~\ref{sec2} introduces the cyclotomic setting used throughout the paper.
Section~\ref{sec3} studies the two-block case as a preliminary step toward the three-block case considered in Section~\ref{sec4}, which contains the main result of the paper.
The cyclotomic-number formulas required in these sections are summarized in Appendix~\ref{secA}.
Finally, Section~\ref{sec5} briefly comments on the difficulties of extending the method to four or more blocks.

\section{Cyclotomic Preliminaries}
\label{sec2}

In this section, we introduce the cyclotomic setting used throughout the paper. 
Let \(q=ef+1\) be a prime power, and let \(\alpha\) be a primitive element of \(\mathbb{F}_q\). 
For \(0\le i\le e-1\), the \(i\)-th \emph{cyclotomic class} of order \(e\) is defined by
\[
C_i^e=\alpha^i\langle \alpha^e\rangle
=
\{\alpha^{es+i}\mid 0\le s\le f-1\}.
\]
The classes \(C_0^e,C_1^e,\dots,C_{e-1}^e\) form a partition of \(\mathbb{F}_q^\ast\). 
For integers \(0\le i,j\le e-1\), the \emph{cyclotomic number} of order \(e\) is defined by
\[
(i,j)_e
=
|(C_i^e+1)\cap C_j^e|.
\]
We shall use the standard symmetry relations of cyclotomic numbers
(see, for example, \cite{storer1967cyclotomy}):
\begin{equation}
\label{eq:symmetry}
(i,j)_e=
\begin{cases}
(j,i)_e
& \mbox{if \(f\) is even,}\\
(j+e/2,i+e/2)_e
& \mbox{if \(f\) is odd.}
\end{cases}
\end{equation}

Let \(\mathcal D=\{D_0,D_1,\dots,D_{m-1}\}\) be a family of disjoint \(k\)-subsets of an additive abelian group \(G\) of order \(n\). 
The family \(\mathcal D\) is called an \((n,m,k,\lambda)\)-\emph{external difference family} (EDF) if every nonzero element of \(G\) appears exactly \(\lambda\) times among the external differences
\[
\bigcup_{0\le i\ne j\le m-1}\Delta(D_i,D_j),
\]
where
\[
\Delta(D_i,D_j)=\{x-y\mid x\in D_i,\ y\in D_j\}
\]
is regarded as a multiset.
By counting external differences in two ways, the parameters satisfy
\[
m(m-1)k^2=\lambda(n-1).
\]

Throughout this paper, we consider EDFs whose blocks are cyclotomic classes. More precisely, for distinct integers
\(i_0,i_1,\dots,i_{m-1}\pmod e\),
we consider the family
\[
\mathcal D
=
\{C_{i_0}^e,C_{i_1}^e,\dots,C_{i_{m-1}}^e\}.
\]
Using standard counting arguments with cyclotomic numbers
(see, for example, \cite{storer1967cyclotomy,mutoh2008difference}),
the multiplicity of an external difference depends only on the cyclotomic class containing the difference element. 
Consequently, for each \(r\), we may define \(M_r\) to be the multiplicity of an element of \(C_r^e\) among the external differences of \(\mathcal D\). 
Then \(\mathcal D\) forms an EDF if and only if the values \(M_r\) are constant for all \(r\). 
In the following sections, we investigate this condition in the cases \(m=2\) and \(m=3\).

\section{The Two-Block Case}
\label{sec3}

In this section, we study the case of two blocks. Let
\[
\mathcal D=\{C_0^4,C_2^4\}
\]
denote the family consisting of the even cyclotomic classes of order four in \(\mathbb{F}_p\).

If \(p\equiv3\pmod4\), then the classical cyclotomic construction
\(\{C_0^2,C_1^2\}\) forms a
\((p,2,(p-1)/2,(p-1)/2)\)-EDF.
When \(p\equiv1\pmod4\), however, this construction no longer yields an EDF, which naturally leads to the decomposition of the quadratic residues into cyclotomic classes of order four. In this case, the EDF condition leads to a nontrivial arithmetic characterization. 
We next consider the two-block case from the same viewpoint. 
The following argument illustrates how cyclotomic-number computations translate the EDF condition into explicit relations among arithmetic parameters.

Since \(|C_0^4|=|C_2^4|=(p-1)/4\), the family \(\mathcal D\) is a candidate for a \((p,2,(p-1)/4,(p-1)/8)\)-EDF.

\begin{lemma}
\label{lem:two-block}
Let \(p=4f+1\) be a prime, and let
\(\mathcal D=\{C_0^4,C_2^4\}\)
be the family of cyclotomic classes of order four in \(\mathbb{F}_p\).
For \(0\le r\le 3\), denote by \(M_r\) the multiplicity of an element of \(C_r^4\) among the external differences of \(\mathcal D\). Then
\[
M_r=(2-r,-r)_4+(-r,2-r)_4.
\]
\end{lemma}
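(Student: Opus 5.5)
Fix $d\in C_r^4$ and count ordered pairs $(x,y)$ with $x-y=d$ and $x,y$ in distinct blocks; there are two such block orders. Write $N_{a,b}(d)=|\{(x,y)\in C_a^4\times C_b^4 : x-y=d\}|$. Then
\[
M_r=N_{0,2}(d)+N_{2,0}(d).
\]
The plan is to show that $N_{a,b}(d)=(b-r,a-r)_4$ for every $d\in C_r^4$, indices read modulo $4$.

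To prove this, write $y=dt$ and $x=du$. The condition $x-y=d$ becomes $u=t+1$. Since $d\in C_r^4$ and $C_r^4$ is a coset of the subgroup $C_0^4$, we have $x\in C_a^4$ if and only if $u\in C_{a-r}^4$, and $y\in C_b^4$ if and only if $t\in C_{b-r}^4$. Hence
\[
N_{a,b}(d)=|\{t\in C_{b-r}^4 : t+1\in C_{a-r}^4\}|=|(C_{b-r}^4+1)\cap C_{a-r}^4|=(b-r,a-r)_4 .
\]
This value does not depend on the choice of $d$ within $C_r^4$, which justifies the notation $M_r$.

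Applying this with $(a,b)=(0,2)$ gives $N_{0,2}(d)=(2-r,-r)_4$. Applying it with $(a,b)=(2,0)$ gives $N_{2,0}(d)=(-r,2-r)_4$. Adding the two yields the claimed formula.

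The main point requiring care is the direction convention in $(i,j)_e=|(C_i^e+1)\cap C_j^e|$: the first index belongs to $t$, whose translate $t+1$ lies in the second class. With the orientation chosen above (so that $t$ corresponds to the subtracted element $y$), the indices come out exactly in the order stated. Had the pairs been oriented the other way, one would instead invoke the symmetry relation~\eqref{eq:symmetry}. In any case the sum of the two terms is symmetric under swapping the blocks, so the orientation does not affect the final formula.
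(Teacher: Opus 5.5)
Your proof is correct and follows essentially the same approach as the paper: it normalizes by $d^{-1}$ (your substitution $x=du$, $y=dt$), so that the class shift by $-r$ turns the count for $\Delta(C_0^4,C_2^4)$ into $(2-r,-r)_4$ and the count for $\Delta(C_2^4,C_0^4)$ into $(-r,2-r)_4$. Your general formula $N_{a,b}(d)=(b-r,a-r)_4$, together with the explicit check that it does not depend on the choice of $d\in C_r^4$, is a slightly tidier packaging of the same argument.
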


\begin{proof}
Fix \(r\) with \(0\le r\le 3\), and let \(d\in C_r^4\). Consider the equation
\[
x-y=d,
\qquad
x\in C_0^4,\ y\in C_2^4.
\]
Dividing by \(d\), we obtain
\[
d^{-1}x-d^{-1}y=1.
\]
Since \(d\in C_r^4\), multiplication by \(d^{-1}\) shifts cyclotomic classes by \(-r\), so that \(d^{-1}x\in C_{-r}^4\) and \(d^{-1}y\in C_{2-r}^4\). 
Hence the number of solutions is \((2-r,-r)_4\). 
The term \((-r,2-r)_4\) is obtained similarly from
\(\Delta(C_2^4,C_0^4)\).
\end{proof}

We now derive the corresponding arithmetic characterization.

\begin{theorem}
\label{thm:two-block}
Let \(p=8k+1\) be a prime. 
Then
\(\mathcal D=\{C_0^4,C_2^4\}\)
forms a
\((p,2,(p-1)/4,(p-1)/8)\)-EDF over \(\mathbb F_p\)
if and only if \(k/2\) is a square.
\end{theorem}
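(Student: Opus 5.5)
The plan is to use Lemma~\ref{lem:two-block} to express each multiplicity \(M_r\) as a cyclotomic number of order four, and then substitute the classical evaluation of those numbers (Appendix~\ref{secA}). Since \(p=8k+1\), we have \(f=(p-1)/4=2k\), which is even. By \eqref{eq:symmetry}, \((i,j)_4=(j,i)_4\), so Lemma~\ref{lem:two-block} collapses to
\[
M_r=2\,(2-r,-r)_4 .
\]
This gives \(M_0=2(2,0)_4\), \(M_1=2(1,3)_4\), \(M_2=2(0,2)_4\) and \(M_3=2(3,1)_4\).

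Next I invoke the standard table for order four with \(f\) even. Write \(p=s^2+4t^2\) with \(s\equiv1\pmod 4\); here \(s\) is uniquely determined and \(t\) is determined up to sign. In that table \((0,2)_4=(2,0)_4=(2,2)_4=C\) and \((1,3)_4=(3,1)_4=E\), where
\[
16C=p-3+2s,\qquad 16E=p+1-2s.
\]
Hence \(M_0=M_2=2C\) and \(M_1=M_3=2E\). Since \(M_r\) depends only on the class of the difference, \(\mathcal D\) is an EDF if and only if \(C=E\). That condition reads \(p-3+2s=p+1-2s\), that is, \(s=1\).

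It remains to translate \(s=1\) into the condition on \(k\). If \(s=1\), then \(8k=p-1=4t^2\), so \(t^2=2k\). Thus \(t\) is even; writing \(t=2u\) gives \(k/2=u^2\), a square. Conversely, suppose \(k/2=u^2\). Then \(p=1+16u^2=1^2+4(2u)^2\), and \(1\equiv 1\pmod 4\). By the uniqueness of the representation \(p=s^2+4t^2\) with \(s\equiv1\pmod4\) (Fermat/Gauss), we get \(s=1\). As a sanity check, in this case \(16C=p-1\), so \(M_r=2C=(p-1)/8\), which agrees with \(\lambda\) from \(m(m-1)k^2=\lambda(n-1)\).

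The only delicate point is to use the correct normalization of the order-four cyclotomic numbers. The sign convention \(s\equiv1\pmod4\) and the placement of \(C\) and \(E\) in the table must match. The sign of \(t\) does not matter, since \(C\) and \(E\) do not involve \(t\). I would double-check the table entries against the identities \(\sum_j (i,j)_4=f-\delta_{i,0}\) before relying on them.
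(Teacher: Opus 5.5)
Your proposal is correct and follows essentially the same route as the paper. Lemma~\ref{lem:two-block} together with \((i,j)_4=(j,i)_4\) for even \(f\) reduces the EDF condition to \((0,2)_4=(1,3)_4\); the order-four table turns this into \(s=1\); and \(s=1\) translates into \(k=2u^2\). Your explicit appeal to the uniqueness of \(s\) in \(p=s^2+4t^2\), \(s\equiv 1\pmod 4\), in the converse direction makes precise what the paper passes over with ``we may take \(s=1\)''. Your normalization \(16(0,2)_4=p-3+2s\), \(16(1,3)_4=p+1-2s\) matches the appendix, whereas the paper's displayed equations in this proof misprint the factor as \(8\); this does not affect the argument.
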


\begin{proof}
By Lemma~\ref{lem:two-block}, the family \(\mathcal D\) forms an EDF
with the required parameters if and only if the values \(M_r\) are constant for all \(r\). 
The symmetry relations \eqref{eq:symmetry} then show that this condition reduces to
\[
(0,2)_4=(1,3)_4.
\]
It is well known (see, for example, \cite{cox2013primes}) that every prime
\(p\equiv 1\pmod 4\) can be represented in the form
\[
p=s^2+4t^2,
\qquad
s\equiv 1\pmod 4.
\]
Substituting the representative values of cyclotomic numbers of order four from Appendix~\ref{secA1}, we obtain
\begin{align}
8(0,2)_4 &= p-3+2s,
\label{eq:two-block-1}\\
8(1,3)_4 &= p+1-2s.
\label{eq:two-block-2}
\end{align}
Hence \((0,2)_4=(1,3)_4\) if and only if \(s=1\). Thus
\[
p=1+4t^2.
\]
Since \(p=8k+1\), we have \(t^2=2k\). Therefore \(t\) is even, say \(t=2\tau\), and hence \(k/2=\tau^2\) is a square.

Conversely, suppose that \(k/2\) is a square, say \(k=2\tau^2\). Then
\[
p=8k+1=16\tau^2+1=1+4(2\tau)^2.
\]
Thus in the representation \(p=s^2+4t^2\) with \(s\equiv 1\pmod 4\), we may take \(s=1\) and \(t=\pm 2\tau\). Substituting \(s=1\) into \eqref{eq:two-block-1} and \eqref{eq:two-block-2}, we obtain
\[
(0,2)_4=(1,3)_4.
\]
Hence \(\mathcal D\) forms a
\((p,2,(p-1)/4,(p-1)/8)\)-EDF over \(\mathbb F_p\).
\end{proof}

\begin{example}
For \(p=257=8\cdot 32+1\), we have \(k=32\) and \(k/2=4^2\). Hence
\(\mathcal D=\{C_0^4,C_2^4\}\)
forms a \((257,2,64,32)\)-EDF over  \(\mathbb{F}_{257}\) by Theorem~\ref{thm:two-block}.
\end{example}

The above argument illustrates how the same method extends naturally to the three-block case considered in Section~\ref{sec4}. 
In particular, the EDF condition is reduced to explicit equalities among cyclotomic numbers, which can then be analyzed by means of their known evaluations.

\section{The Three-Block Case}
\label{sec4}

In this section, we study the case of three blocks, which contains the main theorem of the paper. 
Let \(p=12k+1\) be a prime, and let
\[
\mathcal D=\{C_0^6,C_2^6,C_4^6\}
\]
be the family consisting of the even cyclotomic classes of order six in \(\mathbb{F}_p\). 
Since
\(|C_0^6|=|C_2^6|=|C_4^6|=(p-1)/6\),
the family \(\mathcal D\) is a candidate for a
\((p,3,(p-1)/6,(p-1)/6)\)-EDF.

\begin{lemma}
\label{lem:three-block}
Let \(p=12k+1\) be a prime, and let
\(\mathcal D=\{C_0^6,C_2^6,C_4^6\}\)
be the family of cyclotomic classes of order six in \(\mathbb{F}_p\).
For \(0\le r\le 5\), denote by \(M_r\) the multiplicity of an element of \(C_r^6\) among the external differences of \(\mathcal D\). Then
\[
M_r=
\sum_{\substack{0\le i,j\le 2\\ i\ne j}}
(2j-r,2i-r)_6.
\]
\end{lemma}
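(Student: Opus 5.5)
The plan is to follow the proof of Lemma~\ref{lem:two-block} verbatim, now summing over all ordered pairs of distinct blocks. The external differences of \(\mathcal D\) form the multiset union of \(\Delta(C_{2i}^6,C_{2j}^6)\) over ordered pairs \((i,j)\) with \(0\le i\ne j\le 2\). Since these blocks are pairwise disjoint, the multiplicity \(M_r\) of a fixed \(d\in C_r^6\) is the sum over such pairs of the number \(N_{i,j}(d)\) of solutions of
\[
x-y=d,\qquad x\in C_{2i}^6,\ y\in C_{2j}^6 .
\]
So it suffices to show that \(N_{i,j}(d)=(2j-r,2i-r)_6\), with all indices read modulo \(6\).

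For a fixed pair \((i,j)\), divide by \(d\) to get \(d^{-1}x-d^{-1}y=1\), which is the same as \(d^{-1}x=d^{-1}y+1\). Write \(d=\alpha^{6s+r}\). Then multiplication by \(d^{-1}\) maps \(C_a^6\) bijectively onto \(C_{a-r}^6\). Hence \(u=d^{-1}y\) runs over \(C_{2j-r}^6\) and \(v=d^{-1}x\) runs over \(C_{2i-r}^6\). The map \((x,y)\mapsto(d^{-1}x,d^{-1}y)\) is a bijection between the solution sets. Counting pairs \((u,v)\) with \(v=u+1\), \(u\in C_{2j-r}^6\), \(v\in C_{2i-r}^6\) is exactly counting elements of \((C_{2j-r}^6+1)\cap C_{2i-r}^6\). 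By definition this number is \((2j-r,2i-r)_6\).

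Summing over the six ordered pairs \((i,j)\) then gives the stated formula. It also confirms that \(M_r\) depends only on \(r\) and not on the choice of \(d\in C_r^6\).

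No real obstacle is expected. The only points needing care are these:
\begin{itemize}
\item the order of the arguments in the cyclotomic number: the element shifted by \(+1\) (coming from \(y\), the subtrahend block \(C_{2j}^6\)) goes first;
\item the convention that indices are taken modulo \(6\);
\item that the congruence \(p=12k+1\) is used only so that \(p\equiv1\pmod 6\) and the classes are defined, with \(f=2k\).
\end{itemize}
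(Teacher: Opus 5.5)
Your proposal is correct and follows the paper's proof: divide $x-y=d$ by $d$, use that multiplication by $d^{-1}$ shifts class indices by $-r$, identify the count with $|(C_{2j-r}^6+1)\cap C_{2i-r}^6|=(2j-r,2i-r)_6$, and sum over the six ordered pairs $(i,j)$. Your explicit bijection of solution sets and your care about the order of arguments only spell out what the paper leaves implicit. Disjointness of the blocks is not needed to add multiplicities in a multiset union, but invoking it does no harm.
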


\begin{proof}
The argument is similar to that of Lemma~\ref{lem:two-block}. Fix \(r\) with \(0\le r\le 5\), and let \(d\in C_r^6\). For each ordered pair \((i,j)\) with \(i\ne j\), consider the equation
\[
x-y=d,
\qquad
x\in C_{2i}^6,\ y\in C_{2j}^6.
\]
Dividing by \(d\), we obtain
\[
d^{-1}x-d^{-1}y=1.
\]
Since multiplication by \(d^{-1}\) shifts cyclotomic classes by \(-r\), we have
\(d^{-1}x\in C_{2i-r}^6\) and
\(d^{-1}y\in C_{2j-r}^6\). Hence the number of solutions is
\[
(2j-r,2i-r)_6.
\]
Summing over all ordered pairs \((i,j)\) with \(i\ne j\) yields the formula for \(M_r\).
\end{proof}

We now obtain the main theorem of the paper.

\begin{theorem}
\label{thm:main}
Let \(p=12k+1\) be a prime. 
Then
\(\mathcal D=\{C_0^6,C_2^6,C_4^6\}\)
forms a
\((p,3,(p-1)/6,(p-1)/6)\)-EDF
over \(\mathbb{F}_p\) if and only if \(k\) is a square.
\end{theorem}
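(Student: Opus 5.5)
The plan is to reduce the EDF condition to a single equality of cyclotomic numbers of order six, and then evaluate it with the explicit formulas, as in Theorem~\ref{thm:two-block}.

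\emph{Symmetry.} Since \(p=12k+1\), we have \(f=(p-1)/6=2k\), which is even. So \eqref{eq:symmetry} gives \((i,j)_6=(j,i)_6\). By Lemma~\ref{lem:three-block}, each unordered pair \(\{i,j\}\) then contributes twice. The map \(i\mapsto i-r/2\) for even \(r\), or a similar relabelling for odd \(r\), permutes the index set \(\{0,2,4\}\). Hence \(M_r\) depends only on the parity of \(r\):
\[
M_0=2\bigl((0,2)_6+(0,4)_6+(2,4)_6\bigr),\qquad
M_1=2\bigl((1,3)_6+(1,5)_6+(3,5)_6\bigr).
\]
Therefore \(\mathcal D\) is an EDF if and only if
\[
(0,2)_6+(0,4)_6+(2,4)_6=(1,3)_6+(1,5)_6+(3,5)_6 .
\]
The EDF parameters are then automatic from the counting identity \(m(m-1)k^2=\lambda(n-1)\).

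\emph{Explicit evaluation.} Write \(p=A^2+3B^2\) with \(A\equiv 1\pmod 3\), the normalization used by Dickson and Whiteman. The representative values of \((i,j)_6\) in Appendix~\ref{secA} are given separately according to whether \(2\) is a cubic residue modulo \(p\), and within the noncubic case according to a congruence on \(B\). I would substitute them case by case into the two sums above. I expect the terms involving \(B\), which carry the case distinction, to cancel in each case, because both sums are invariant under the relevant relabellings.

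The result should be an identity of the form \(36\bigl(\text{LHS}-\text{RHS}\bigr)=c\,(A-1)\) for a nonzero constant \(c\), analogous to \eqref{eq:two-block-1}–\eqref{eq:two-block-2}.

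As a heuristic check, the pairs \((x,y)\) lie in the squares, split by cubic class. So the difference should be governed by the cubic Jacobi sum, whose real part is essentially \(A\). I would use this to confirm the cancellation if the table manipulation gets messy.

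\emph{Conclusion.} The EDF condition becomes \(A=1\), that is, \(p=1+3B^2\). Then \(12k=3B^2\), so \(B^2=4k\). This forces \(B\) to be even, say \(B=2\beta\), and then \(k=\beta^2\) is a square.

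Conversely, if \(k=\beta^2\), then \(p=1+3(2\beta)^2\). By uniqueness of the representation \(p=A^2+3B^2\) with \(A\equiv1\pmod3\), up to the sign of \(B\), we get \(A=1\). Substituting \(A=1\) yields the required equality.

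\emph{Main obstacle.} The hardest step is the case-by-case verification that the \(B\)-dependent terms cancel in all cases for the character of \(2\). Care is also needed that the normalization of \(A\) in the appendix is really \(A\equiv1\pmod 3\) rather than a sign-dependent convention. If it is not, the condition may come out as \(A=\pm1\), and the parity argument must be rechecked.
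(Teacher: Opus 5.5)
Your proposal follows the paper's proof: reduce the EDF condition by symmetry to equality of the even and odd sums, then substitute the order-six values from Appendix~\ref{secA2} in each of the cases \(2\in C_0^3, C_1^3, C_2^3\). This gives \(36\{(0,2)_6+(0,4)_6+(2,4)_6\}=3p-9+6s\) and \(36\{(1,2)_6+(1,3)_6+(1,4)_6\}=3p+3-6s\) in every case, so your constant is \(c=12\), and the \(t\)-terms cancel by direct inspection of the table rather than by an invariance argument. Before substituting, rewrite your odd sum using \((i,j)_e=(-i,j-i)_e\) together with \((i,j)_6=(j,i)_6\), which gives \((1,5)_6=(1,2)_6\) and \((3,5)_6=(1,4)_6\), the representatives the appendix lists; your explicit appeal to uniqueness of \(s\) in the converse is a detail the paper leaves implicit.
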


\begin{proof}
By Lemma~\ref{lem:three-block}, the family \(\mathcal D\) forms an EDF if and only if the values \(M_r\) are constant for all \(r\). From the formula for \(M_r\) and the symmetry relations in \eqref{eq:symmetry}, we have
\[
M_0=M_2=M_4
=
2\{(0,2)_6+(0,4)_6+(2,4)_6\}
\]
and
\[
M_1=M_3=M_5
=
2\{(1,2)_6+(1,3)_6+(1,4)_6\}.
\]
Thus the EDF condition is equivalent to
\[
(0,2)_6+(0,4)_6+(2,4)_6
=
(1,2)_6+(1,3)_6+(1,4)_6.
\]
It is well known (see, for example, \cite{cox2013primes}) that every prime
\(p\equiv 1\pmod 6\) can be represented in the form
\[
p=s^2+3t^2,
\qquad
s\equiv 1\pmod 3.
\]
Substituting the representative values of cyclotomic numbers of order six from Appendix~\ref{secA2}, we obtain the following identities in each of the three cases
\(2\in C_0^3\), \(2\in C_1^3\), and \(2\in C_2^3\):
\begin{align}
36\{(0,2)_6+(0,4)_6+(2,4)_6\}
&=
3p-9+6s,
\label{eq:three-even-sum}\\
36\{(1,2)_6+(1,3)_6+(1,4)_6\}
&=
3p+3-6s.
\label{eq:three-odd-sum}
\end{align}
It follows from \eqref{eq:three-even-sum} and
\eqref{eq:three-odd-sum} that the EDF condition is equivalent to \(s=1\). 
Hence \(p=1+3t^2\).
Since \(p=12k+1\), we obtain \(t^2=4k\). Thus \(t\) is even, say \(t=2u\), and hence \(k=u^2\). 

Conversely, suppose that \(k=u^2\) for some integer \(u\). Then
\[
p=12u^2+1=1+3(2u)^2
\]
is a representation of the required form with \(s=1\) and \(t=\pm 2u\). With \(s=1\), equations
\eqref{eq:three-even-sum} and
\eqref{eq:three-odd-sum} yield
\[
M_0=M_1=\cdots=M_5.
\]
Hence \(\mathcal D\) forms a
\((p,3,(p-1)/6,(p-1)/6)\)-EDF
over \(\mathbb F_p\).
\end{proof}


\begin{example}
For \(p=109=12\cdot 3^2+1\), Theorem~\ref{thm:main} implies that
\(\mathcal D=\{C_0^6,C_2^6,C_4^6\}\)
forms a \((109,3,18,18)\)-EDF over  \(\mathbb{F}_{109}\).
\end{example}

\section{Concluding Remarks}
\label{sec5}

In Sections~\ref{sec3} and~\ref{sec4}, we studied families consisting of cyclotomic classes with even indices of orders four and six.
In these cases, the EDF conditions reduce to manageable relations among cyclotomic numbers of small orders.

For constructions involving four or more blocks, however, the corresponding multiplicity formulas contain many more cyclotomic numbers, and the resulting conditions become substantially more complicated. 
Although cyclotomic numbers satisfy several symmetry relations, these relations are generally insufficient to reduce the conditions to a small number of parameters.

Another difficulty is that explicit evaluations of cyclotomic numbers become increasingly complicated for larger orders. In the order-six case, the relevant cyclotomic numbers can still be described in terms of the representation \(p=s^2+3t^2\). 
For larger orders, however, the formulas often involve additional auxiliary parameters and may depend on the cyclotomic class containing \(2\).

These difficulties indicate that the analysis becomes substantially more complicated for constructions with four or more blocks. 
Nevertheless, the results of the present paper show that cyclotomic-number techniques are effective for analyzing EDF conditions in the two-block and three-block cases.

\bmhead{Acknowledgements}

The authors would like to thank Yuto Hori for valuable discussions and contributions during the early stages of this work. 
This work was supported by JSPS KAKENHI Grant Number JP24K14819, JP24K06834, and JP22K11936 (to M. Mishima), and JP25K21151 (to Y. Tsunoda).

\begin{appendices}

\section{Cyclotomic Numbers Used in the Proofs} \label{secA}

In this appendix, we summarize the cyclotomic-number formulas used in Sections~\ref{sec3} and \ref{sec4}.

\subsection{Cyclotomic Numbers of Order Four}
\label{secA1}

The following representative values are taken from Lemma \(19'\) in Part~1 of Storer~\cite{storer1967cyclotomy}.

Let \(p=4f+1\) be a prime with \(f\) even. Then there exist integers \(s\) and \(t\) such that
\[
p=s^2+4t^2,
\qquad
s\equiv 1\pmod 4.
\]

The representative cyclotomic numbers of order four are given by
\begin{align*}
16(0,0)_4&=p-11-6s,\\
16(0,1)_4&=p-3+2s+8t,\\
16(0,2)_4&=p-3+2s,\\
16(0,3)_4&=p-3+2s-8t,\\
16(1,3)_4&=p+1-2s.
\end{align*}
The remaining values are determined by the standard symmetry relations of cyclotomic numbers.

\subsection{Cyclotomic Numbers of Order Six} \label{secA2}

We use the following representative values adapted from Theorem~9 of 
Dickson~\cite{dickson1935cyclotomy}.

Let \(p=6f+1\) be a prime with \(f\) even. Then there exist integers \(s\) and \(t\) such that
\[
p=s^2+3t^2,
\qquad
s\equiv 1\pmod 3.
\]
The explicit values depend on the cyclotomic class containing \(2\). 
These distinctions are essential in the proof of Theorem~\ref{thm:main}.

\subsubsection*{Case 1: \(2\in C_0^3\)}

The representative values are
\begin{align*}
36(0,0)_6&=p-17-20s,\\
36(0,1)_6&=p-5+4s+18t,\\
36(0,2)_6&=p-5+4s+6t,\\
36(0,3)_6&=p-5+4s,\\
36(0,4)_6&=p-5+4s-6t,\\
36(0,5)_6&=p-5+4s-18t.
\end{align*}

The remaining representative values are
\begin{align*}
36(1,2)_6
&=
36(1,3)_6
=
36(1,4)_6
=
36(2,4)_6\\
&=p+1-2s.
\end{align*}

\subsubsection*{Case 2: \(2\in C_1^3\)}

The representative values are
\begin{align*}
36(0,0)_6&=p-17-8s+6t,\\
36(0,1)_6&=p-5+4s+12t,\\
36(0,2)_6&=p-5+4s-6t,\\
36(0,3)_6&=p-5+4s-6t,\\
36(0,4)_6&=p-5-8s,\\
36(0,5)_6&=p-5+4s-6t.
\end{align*}

The remaining representative values are
\begin{align*}
36(1,2)_6
&=
36(1,3)_6
=
p+1-2s-6t,\\
36(1,4)_6&=p+1-2s+12t,\\
36(2,4)_6&=p+1+10s+6t.
\end{align*}

\subsubsection*{Case 3: \(2\in C_2^3\)}

The representative values are
\begin{align*}
36(0,0)_6&=p-17-8s-6t,\\
36(0,1)_6&=p-5+4s+6t,\\
36(0,2)_6&=p-5-8s,\\
36(0,3)_6&=p-5+4s+6t,\\
36(0,4)_6&=p-5+4s+6t,\\
36(0,5)_6&=p-5+4s-12t.
\end{align*}

The remaining representative values are
\begin{align*}
36(1,2)_6&=p+1-2s+6t,\\
36(1,3)_6&=p+1-2s-12t,\\
36(1,4)_6&=p+1-2s+6t,\\
36(2,4)_6&=p+1+10s-6t.
\end{align*}




\end{appendices}


\bibliography{edf}


\begin{thebibliography}{6}
\ifx \bisbn   \undefined \def \bisbn  #1{ISBN #1}\fi
\ifx \binits  \undefined \def \binits#1{#1}\fi
\ifx \bauthor  \undefined \def \bauthor#1{#1}\fi
\ifx \batitle  \undefined \def \batitle#1{#1}\fi
\ifx \bjtitle  \undefined \def \bjtitle#1{#1}\fi
\ifx \bvolume  \undefined \def \bvolume#1{\textbf{#1}}\fi
\ifx \byear  \undefined \def \byear#1{#1}\fi
\ifx \bissue  \undefined \def \bissue#1{#1}\fi
\ifx \bfpage  \undefined \def \bfpage#1{#1}\fi
\ifx \blpage  \undefined \def \blpage #1{#1}\fi
\ifx \burl  \undefined \def \burl#1{\textsf{#1}}\fi
\ifx \doiurl  \undefined \def \doiurl#1{\url{https://doi.org/#1}}\fi
\ifx \betal  \undefined \def \betal{\textit{et al.}}\fi
\ifx \binstitute  \undefined \def \binstitute#1{#1}\fi
\ifx \binstitutionaled  \undefined \def \binstitutionaled#1{#1}\fi
\ifx \bctitle  \undefined \def \bctitle#1{#1}\fi
\ifx \beditor  \undefined \def \beditor#1{#1}\fi
\ifx \bpublisher  \undefined \def \bpublisher#1{#1}\fi
\ifx \bbtitle  \undefined \def \bbtitle#1{#1}\fi
\ifx \bedition  \undefined \def \bedition#1{#1}\fi
\ifx \bseriesno  \undefined \def \bseriesno#1{#1}\fi
\ifx \blocation  \undefined \def \blocation#1{#1}\fi
\ifx \bsertitle  \undefined \def \bsertitle#1{#1}\fi
\ifx \bsnm \undefined \def \bsnm#1{#1}\fi
\ifx \bsuffix \undefined \def \bsuffix#1{#1}\fi
\ifx \bparticle \undefined \def \bparticle#1{#1}\fi
\ifx \barticle \undefined \def \barticle#1{#1}\fi
\bibcommenthead
\ifx \bconfdate \undefined \def \bconfdate #1{#1}\fi
\ifx \botherref \undefined \def \botherref #1{#1}\fi
\ifx \url \undefined \def \url#1{\textsf{#1}}\fi
\ifx \bchapter \undefined \def \bchapter#1{#1}\fi
\ifx \bbook \undefined \def \bbook#1{#1}\fi
\ifx \bcomment \undefined \def \bcomment#1{#1}\fi
\ifx \oauthor \undefined \def \oauthor#1{#1}\fi
\ifx \citeauthoryear \undefined \def \citeauthoryear#1{#1}\fi
\ifx \endbibitem  \undefined \def \endbibitem {}\fi
\ifx \bconflocation  \undefined \def \bconflocation#1{#1}\fi
\ifx \arxivurl  \undefined \def \arxivurl#1{\textsf{#1}}\fi
\csname PreBibitemsHook\endcsname

\bibitem[\protect\citeauthoryear{Paterson and Stinson}{2016}]{paterson2016combinatorial}
\begin{barticle}
\bauthor{\bsnm{Paterson}, \binits{M.B.}},
\bauthor{\bsnm{Stinson}, \binits{D.R.}}:
\batitle{Combinatorial characterizations of algebraic manipulation detection codes involving generalized difference families}.
\bjtitle{Discrete Mathematics}
\bvolume{339}(\bissue{12}),
\bfpage{2891}--\blpage{2906}
(\byear{2016})
\doiurl{10.1016/j.disc.2016.06.004}
\end{barticle}
\endbibitem

\bibitem[\protect\citeauthoryear{Mutoh and Tonchev}{2008}]{mutoh2008difference}
\begin{barticle}
\bauthor{\bsnm{Mutoh}, \binits{Y.}},
\bauthor{\bsnm{Tonchev}, \binits{V.D.}}:
\batitle{Difference systems of sets and cyclotomy}.
\bjtitle{Discrete Mathematics}
\bvolume{308}(\bissue{14}),
\bfpage{2959}--\blpage{2969}
(\byear{2008})
\doiurl{10.1016/j.disc.2007.08.017}
\end{barticle}
\endbibitem

\bibitem[\protect\citeauthoryear{Huczynska and Johnson}{2023}]{huczynska2023internal}
\begin{barticle}
\bauthor{\bsnm{Huczynska}, \binits{S.}},
\bauthor{\bsnm{Johnson}, \binits{L.M.}}:
\batitle{Internal and external partial difference families and cyclotomy}.
\bjtitle{Discrete Mathematics}
\bvolume{346}(\bissue{3}),
\bfpage{113295}
(\byear{2023})
\doiurl{10.1016/j.disc.2022.113295}
\end{barticle}
\endbibitem

\bibitem[\protect\citeauthoryear{Storer}{1967}]{storer1967cyclotomy}
\begin{bbook}
\bauthor{\bsnm{Storer}, \binits{T.}}:
\bbtitle{Cyclotomy and Difference Sets}.
\bsertitle{Lectures in Advanced Mathematics}.
\bpublisher{Markham Publishing Company},
\blocation{Chicago}
(\byear{1967})
\end{bbook}
\endbibitem

\bibitem[\protect\citeauthoryear{Cox}{2013}]{cox2013primes}
\begin{bbook}
\bauthor{\bsnm{Cox}, \binits{D.A.}}:
\bbtitle{Primes of the Form $x^2+ \lowercase{ny}^2$: Fermat, Class Field Theory, and Complex Multiplication},
\bedition{2}nd edn.
\bpublisher{John Wiley \& Sons},
\blocation{Hoboken, NJ}
(\byear{2013})
\end{bbook}
\endbibitem

\bibitem[\protect\citeauthoryear{Dickson}{1935}]{dickson1935cyclotomy}
\begin{barticle}
\bauthor{\bsnm{Dickson}, \binits{L.E.}}:
\batitle{Cyclotomy, higher congruences, and \uppercase{W}aring's problem}.
\bjtitle{American Journal of Mathematics}
\bvolume{57}(\bissue{2}),
\bfpage{391}--\blpage{424}
(\byear{1935})
\doiurl{10.2307/2371217}
\end{barticle}
\endbibitem

\end{thebibliography}

\end{document}